\newtheorem{proposition}{Proposition}
\begin{document}

\title[Encoding paths in graphs]{Encoding shortest paths in graphs assuming the code is queried using bit-wise comparison}
\author{G. Caylak Kayaturan and A. Vernitski}
\address{Department of Mathematical Sciences, University of Essex, UK}

\begin{abstract}
One model of message delivery in a computer network is based on labelling each edge by a subset of a (reasonably small) universal set, and then encoding a path as the union of the labels of its edges. Earlier work suggested using random edge labels, and that approach has a disadvantage of producing errors (false positives). We demonstrate that if we make an assumption about the shape of the network (in this paper we consider networks with a dense core and a tree-like periphery) and assume that messages are delivered along shortest paths, we can label edges in a way which prevents any false positives.
\end{abstract}

\maketitle


\section{Introduction\label{sec:Introduction}}

Consider an undirected graph $G=(V,E)$ and a universal set $U$.
The graph $G$ models a computer network, and $U$, as we shall see,
models the header of a message sent from one computer in $G$ to another;
accordingly, we assume that the size $|V|$ and $|E|$ is approximately
in the range $10^{3}-10^{5}$ and the size $|U|$ is approximately
in the range $10^{2}-10^{3}$. Suppose each edge $e\in E$ is labelled
by a subset of $U$; we shall denote the label of $e$ by $[e]\subseteq U$.
The label of a set of edges $S\subseteq E$ is defined as $[S]=\bigcup_{e\in S}[e]$.
We shall say that an edge $e\in E$ is \emph{recognised} by a label
$[S]$ if $[e]\subseteq [S]$. Obviously, if $e\in S$ then $e$ is
recognised by $[S]$; however, it is also possible that $e\not\in S$
and $e$ is recognised by $[S]$; one refers to this situation as
a \emph{false positive}. We shall say that a set of edges $S$ is
\emph{represented faithfully} by its label $[S]$ if none of the edges
$e\not\in S$ which are adjacent to $S$ are recognised by $[S]$.
A particular scenario we have in mind is when $S$ is a path connecting
vertices $u$ and $v$, and $[S]$ is used for routing a message from
$u$ to $v$ (or from $v$ to $u$); thus, $[S]$ is sent along with
the message as its header. We assume that each vertex $v\in V$ is a computer which
cannot access information about the general shape of the network when
it is used for routing messages, but can access the labels of the
edges which are incidental to $v$; accordingly, $v$ can compare
the header of the message $[S]$ with these labels and decide along
which edge the message should be sent next. If $S$ is represented
faithfully by $[S]$ then at each vertex on $S$ it is clear from
inspecting $[S]$ along which edge the message should be sent next.
However, if $S$ is not represented faithfully by $[S]$, that is,
there is a false positive $f\in E$ adjacent to the path $S$ then
it will be impossible to find out from inspecting $[S]$ whether the
message should be sent along $f$ or not. 

In practice, a subset $[S]\subseteq U$ would be represented as a
binary array of length $|U|$, in which each position corresponds to one
fixed element of $U$; in the array representing $[S]$, a bit at
a certain position equal to $0$ (or $1$) means that this element
of $U$ does not belong to $S$ (or belongs to $S$). Thus, the header
attached to the message to describe where and by what route it should
be delivered has size $|U|$. When a computer at a vertex $v$ decides
where to forward the message, it considers each edge $e$ incidental
to $v$ and checks whether $[e]\subseteq[S]$; in practice, this comparison
is implemented as a bitwise comparison of two binary arrays of length
$|U|$ which represent $[e]$ and $[S]$; this operation can be performed
very fast; in fact, it can be performed while the header is passing
through $v$ (for example, as an optical signal), without the need
to store the bits of the header at $v$ and then perform any arithmetic
operations on them. Such fast performance makes this model an attractive
possibility for routing in computer networks \cite{carrea2014optimized}, \cite{Jokela2009}.

One useful labelling (which we shall refer to as the \emph{bit-per-edge
labelling}) is to take $U=E$ and, for each $e\in E$, $[e]=\{e\}$.
Thus, the label of a set $S\subseteq E$ is simply the set of edges
in $S$, that is, $[S]=S$. Note that this labelling represents faithfully
not only every path, but every subset of $E$. The shortcoming of
this labelling is that unless $G$ is small, the size $|U|$, which
is equal to $|E|$, is too large to be usable. 

In our research we assume that $S$ is a path and, more precisely,
if $S$ is a path from a vertex $u$ to a vertex $v$, we assume that
$S$ is one of the shortest paths from $u$ to $v$. Our research concentrates
on looking for ways of labelling edges of a given graph so that, on
the one hand, each shortest path is represented faithfully, and, on
the other hand, the size $|U|$ is reasonably small. In our previous
research we studied the cases when $G$ is a square grid or a hexagonal
grid \cite{caylak2016,kayaturan2016}; without going into much detail, in both cases we found
labellings such that each shortest path is represented faithfully
and the size $|U|$ is of the order $O(\sqrt{|V|})$ or, equivalently,
$O(\sqrt{|E|})$.

In this paper we concentrate on considering graphs that have a
dense core and a tree-like periphery, because some computer networks
have this shape \cite{yook2002modeling, zegura1996model} or are approximated by trees \cite{li2012esm},\cite{reinhardt2012cbfr},\cite{yu2009buffalo},\cite{zhang2010sapper, zhao2010towards} or stars \cite{day1992arrangement, day1994comparative, awwad2003topological}.

We describe how for such a graph, a labelling can be
defined which represents each shortest path faithfully; actually,
our labellings satisfy a stronger property: for each shortest path $S$ and each edge $e$, $[e]\subseteq [S]$ if and only if $e\in S$; that is, our labellings produce no false positives
at all, providing that they are used only for labelling shortest paths and not other sets of edges. 

Our methodology in this paper is as follows: we prove that the labellings we introduce produce no false positives; then we check experimentally what size $|U|$ is required by these labellings, and whether it is within a realistic range.

In practical applications, it is sometimes possible that a message
must be delivered via a path which is not a shortest path. Also, sometimes
a message must be delivered to multiple destinations (this is what
is called multicast, as opposed to unicast). In this paper
we do not consider these generalisations. Another direction of research
is to consider directed graphs, with edges pointing in opposite
directions (that is, from vertex $u$ to vertex $v$ and from vertex
$v$ to vertex $u$) having distinct labels; in this paper we consider
undirected graphs.

\section{Labelling edges\label{sec:Other-ways-of}}

\subsection{A bit per vertex\label{sub:Bit-per-vertex}}

Suppose $G$ is a dense graph; then $|E|$ is relatively large, and
the bit-per-edge labelling from Section \ref{sec:Introduction} is
obviously not optimal. Instead, let us consider a labelling such that
$U=V$ and, for each $e\in E$, $[e]=\{u,v\}$, where $u$ and $v$
are the end vertices of $e$; we shall call it the \emph{bit-per-vertex
labelling}.
\begin{proposition}
If the bit-per-vertex labelling is used to represent a shortest path then it has no false positives.\end{proposition}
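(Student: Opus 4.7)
The plan is to unfold the definitions and reduce the claim to the standard fact that every subpath of a shortest path is itself a shortest path.

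First I would observe that under the bit-per-vertex labelling, for any path $S = v_0, v_1, \ldots, v_k$ the label is exactly the vertex set of $S$:
\[
[S] \;=\; \bigcup_{i=1}^{k}[\{v_{i-1},v_i\}] \;=\; \{v_0,v_1,\ldots,v_k\}.
\]
So the only thing the label $[S]$ records is which vertices lie on the path. An edge $f=\{a,b\}$ is recognised by $[S]$ precisely when both of its endpoints appear on $S$.

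Next, suppose for contradiction that $S$ is a shortest path from $u$ to $v$ and that $f=\{a,b\}\notin S$ is a false positive, so that $a=v_i$ and $b=v_j$ for some indices $i<j$. Because $S$ is a shortest path, its subpath from $v_i$ to $v_j$ (of length $j-i$) is also a shortest $v_i$-to-$v_j$ path in $G$, hence $d(a,b)=j-i$. But the existence of the edge $f$ gives $d(a,b)\le 1$, forcing $j-i=1$. Then $f=\{v_i,v_{i+1}\}$ is literally one of the edges of $S$, contradicting $f\notin S$.

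There is no real obstacle here; the only subtlety worth flagging explicitly is the step "subpaths of shortest paths are shortest," which is what rules out the possibility that two non-consecutive vertices of $S$ could be joined by a chord. Once that is in place the argument is a one-line calculation. I would present the proof in essentially the two short paragraphs above.
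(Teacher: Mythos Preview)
Your proof is correct and follows essentially the same line as the paper's. The only cosmetic difference is that you invoke the lemma ``subpaths of shortest paths are shortest'' to force $j-i\le 1$, whereas the paper spells this out by explicitly splicing the chord $f$ into $S$ to produce a strictly shorter $v_0$--$v_n$ walk; these are two phrasings of the same contradiction.
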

\begin{proof}
Indeed, consider a shortest path from a vertex $v_{0}$ to a vertex
$v_{n}$ consisting of edges $e_{k}=\{v_{k},v_{k+1}\}$, where $k=0,\dots,n-1$.
Suppose that there is a false positive $f\in E$; hence, $f=\{v_{i},v_{j}\}$
for some $i,j$; let us assume that $i<j$. Since $f$ is a false
positive, it does not coincide with any $e_{k}$; hence, $j-i>1$.
Consider a path consisting of edges $\{v_{0},v_{1}\}$; $\dots$;
$\{v_{i-1},v_{i}\}$; $f=\{v_{i},v_{j}\}$; $\{v_{j},v_{j+1}\}$;$\dots$;
$\{v_{n-1},v_{n}\}$. This path is a path from $v_{0}$ to $v_{n}$
whose length is less than $n$; this conclusion contradicts the assumption
that the path consisting of edges $e_{k}$ is a shortest path. Thus,
there are no false positives.
\end{proof}

\subsection{Encoding for star graphs\label{sub:Encoding-for-star}}

Consider a star graph $G$ with $n$ edges. For a non-negative integer
$R$, let $K$ be defined as $\left\lceil \sqrt[R]{n}\:\right\rceil $,
that is, the smallest integer which is not less than $\sqrt[R]{n}$.
Thus, $K^{R}\ge n$; therefore, there is a one-to-one mapping from
$E$ to the set of $R$-tuples of integers in the range $\{0,\dots,K-1\}$;
for an edge $e$, let us denote the corresponding tuple by $(\pi_{1}(e),\dots,\pi_{R}(e))$.
Let $U$ consist of pairs $(r,k)$ for each $r=1,\dots,R$ and $k=0,\dots,K-1$
and triples $(r,s,k)$ for all $1\le r<s\le R$ and $k=0,\dots,K-1$.
Let the label $[e]$ of an edge $e$ include pairs $(r,\pi_{r}(e))$
for each $r=1,\dots,R$ and triples $(r,s,\pi_{r}(e)+\pi_{s}(e))$
for all $1\le r<s\le R$, with addition $\pi_{r}(e)+\pi_{s}(e)$ performed
modulo $K$. Let us refer to this labelling as the \emph{star labelling}.
\begin{proposition}
If the star labelling is used to represent a shortest path then it has no false positives.\end{proposition}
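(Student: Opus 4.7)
The plan is to exploit the very restricted geometry of a star: every shortest path has at most two edges, since any two vertices in a star are at distance at most $2$. So I would split the argument into two cases according to the length of $S$, and in each case show that any edge $f$ whose label is contained in $[S]$ must already belong to $S$.

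First, if $S$ consists of a single edge $e$, then $[S]=[e]$ only contains one pair of the form $(r,\cdot)$ for each $r$, namely $(r,\pi_r(e))$. Hence any $f$ with $[f]\subseteq[e]$ must satisfy $\pi_r(f)=\pi_r(e)$ for every $r=1,\dots,R$; since the map $e\mapsto(\pi_1(e),\dots,\pi_R(e))$ is injective, this forces $f=e$, contradicting $f\not\in S$.

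The substantive case is when $S=\{e_1,e_2\}$ is a shortest path of length $2$ through the centre. Suppose $f\not\in S$ is a false positive. The pair elements of $[S]$ of the form $(r,\cdot)$ are exactly $(r,\pi_r(e_1))$ and $(r,\pi_r(e_2))$, so from $[f]\subseteq[S]$ we immediately get $\pi_r(f)\in\{\pi_r(e_1),\pi_r(e_2)\}$ for every $r$. If all these choices could simultaneously be made equal to $\pi_r(e_1)$ (respectively $\pi_r(e_2)$), injectivity of $e\mapsto(\pi_1(e),\dots,\pi_R(e))$ would give $f=e_1$ (respectively $f=e_2$), contradicting $f\not\in S$. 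So the only remaining possibility is that the choices are genuinely ``mixed'': there exist indices with $\pi_r(f)=\pi_r(e_1)\neq\pi_r(e_2)$ and $\pi_s(f)=\pi_s(e_2)\neq\pi_s(e_1)$; by symmetry of the roles of $r$ and $s$ we may assume $r<s$.

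The key step, and the main obstacle, is then to rule out this mixed case using the triples. The triple $(r,s,\pi_r(f)+\pi_s(f))$ belongs to $[f]\subseteq[S]$, and the only triples with prefix $(r,s,\cdot)$ in $[S]$ are $(r,s,\pi_r(e_1)+\pi_s(e_1))$ and $(r,s,\pi_r(e_2)+\pi_s(e_2))$, all additions modulo $K$. Substituting $\pi_r(f)=\pi_r(e_1)$ and $\pi_s(f)=\pi_s(e_2)$ yields
\[
\pi_r(e_1)+\pi_s(e_2)\equiv\pi_r(e_1)+\pi_s(e_1)\pmod K \quad\text{or}\quad \pi_r(e_1)+\pi_s(e_2)\equiv\pi_r(e_2)+\pi_s(e_2)\pmod K,
\]
which cancel to $\pi_s(e_1)\equiv\pi_s(e_2)$ or $\pi_r(e_1)\equiv\pi_r(e_2)$, contradicting the inequalities defining the mixed case. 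Hence no false positive $f$ exists. I expect the slightly fiddly bookkeeping of this mixing argument, together with noting that additions are computed modulo $K$ so that the cancellation is genuine, to be the only non-trivial point.
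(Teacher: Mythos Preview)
Your proof is correct and follows essentially the same approach as the paper's: reduce to paths of length at most two, use the pair elements to pin each coordinate $\pi_r(f)$ to one of the two path edges, use injectivity to force a ``mixed'' pair of indices $r,s$, and then use the single triple $(r,s,\pi_r(f)+\pi_s(f))$ together with cancellation in $\mathbb{Z}/K\mathbb{Z}$ to derive a contradiction. The only cosmetic differences are notation (the paper calls the path edges $e,f$ and the false positive $g$) and that the paper extracts the mixed indices directly from $g\neq e$ and $g\neq f$ rather than via your ``all choices agree'' dichotomy; the content is identical.
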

\begin{proof}
Indeed, consider a shortest path $P$ in a star graph $G$. If it
consists of $0$ edges or $1$ edge, it is obvious that there can
be no false positives. Now suppose it consists of two edges $e$ and
$f$, and assume there is a false positive $g\in E$. 

Since $g\neq e$, there is $r$ such that $\pi_{r}(g)\neq\pi_{r}(e)$.
For this value of $r$, in $[P]$ there are at most two pairs of the
form $(r,k)$, being $(r,\pi_{r}(e))$ and $(r,\pi_{r}(f))$. Since
$g$ is a false positive and $\pi_{r}(g)\neq\pi_{r}(e)$, we conclude
that $\pi_{r}(g)=\pi_{r}(f)$. 

Using the same argument, starting from $g\neq f$ we conclude that
there is $s$ such that $\pi_{s}(g)\neq\pi_{s}(f)$ and $\pi_{s}(g)=\pi_{s}(e)$.

Obviously $r\neq s$; assume that $r<s$. For these values of $r$
and $s$, in $[P]$ there are at most two triples of the form $(r,s,k)$,
being $(r,s,\pi_{r}(e)+\pi_{s}(e))$ and $(r,s,\pi_{r}(f)+\pi_{s}(f))$.
Since $g$ is a false positive, $\pi_{r}(g)+\pi_{s}(g)$ is equal
to either $\pi_{r}(e)+\pi_{s}(e)$ or $\pi_{r}(f)+\pi_{s}(f)$. If
$\pi_{r}(g)+\pi_{s}(g)=\pi_{r}(e)+\pi_{s}(e)$, since $\pi_{s}(g)=\pi_{s}(e)$,
we conclude that $\pi_{r}(g)=\pi_{r}(e)$, and this fact contradicts
our earlier conclusion $\pi_{r}(g)\neq\pi_{r}(e)$. Likewise, the
case $\pi_{r}(g)+\pi_{s}(g)=\pi_{r}(f)+\pi_{s}(f)$ is also impossible.
Thus, there are no false positives.
\end{proof}
For $R=1$ the star labelling coincides with the bit-per-edge encoding,
and this value of $R$ is best to use when $n$ is small. For larger
values of $n$, larger values of $R$ become optimal; for each specific
size of a star, the optimal value of $R$ can be found simply by calculating
the size $|U|=\left(R+\frac{R(R-1)}{2}\right)\left\lceil \sqrt[R]{n}\:\right\rceil $
for all reasonably possible values of $R$, that is, for $R$ ranging
from $1$ to $\log_{2}n$ (because, obviously, $2$ is the smallest
possible value of $K$). The following table shows optimal values
of $R$ and the corresponding size $|U|$ for some sizes of stars.
As you can see, even for unrealistically large values of $n$ the
size $|U|$ remains reasonably small. Our computational experiments\footnote{We are grateful to an anonymous referee who has suggested the following sketch of a proof as to why this growth rate is observed. The size of $U$ is $O(R^2 n^{1/R})$, so one can take the derivative of the function $f(R)=R^2 n^{1/R}$ and set it equal to zero to determine the local optimum, which turns out to be at $R=O(\log n)$, giving a size of $U$ of $O(\log^2 n)$.}
show that if the optimal value of $R$ is used, the size $|U|$ of
the star labelling grows at the rate $O(\log^{2}n)$. For comparison,
the column `theoretical smallest size' is calculated as the logarithm
to the base $2$ of the total number of shortest paths in the graph;
that is, this is the smallest number of bits needed to distinguish
between shortest paths. The theoretical smallest size grows at the
rate $O(\log n)$.

\begin{tabular}{|c|c|c|c|}
\hline 
$n=|E|$ & theoretical smallest size & $|U|$ & optimal $R$\tabularnewline
\hline 
\hline 
$10$ & $6$ & $10$ & $1$\tabularnewline
\hline 
$10^{2}$ & $13$ & $30$ & $2$\tabularnewline
\hline 
$10^{3}$ & $19$ & $60$ & $3$\tabularnewline
\hline 
$10^{4}$ & $26$ & $100$ & $4$\tabularnewline
\hline 
$10^{5}$ & $33$ & $147$ & $6$\tabularnewline
\hline 
$10^{6}$ & $39$ & $210$ & $6$\tabularnewline
\hline 
\end{tabular}

\section{Decomposing graphs for encoding\label{sec:Decomposing-graphs-for}}

Suppose a graph $G$ can be decomposed into
its core $C$ and its periphery $P$; now we shall describe how to combine labellings for $C$ and $P$ into a labelling for $G$. Consider a graph $G=(V,E)$
and a subset $V_{C}\subset V$. Let $C=(V_{C},E_{C})$ be the subgraph
induced by $V_{C}$, and let $P=(V_{P},E_{P})$ be the graph produced
from $G$ by contracting $C$ into one vertex. 

Suppose $G$ is a graph decomposed into a core $C$ and a periphery
$P$. Suppose labellings are introduced in $C$ and $P$. Then each edge $e\in E_{C}$ has a label $[e]_{C}\in U$
and each edge $e\in E_{P}$ has a label $[e]_{P}\in V$, where $U$
and $V$ are two sets; assume that $U$ and $V$ are disjoint. Slightly
abusing notation, we shall identify edges in $E_{P}$ with the corresponding
edges in $E$. Let the universal set for labelling edges in $G$ be
$W=U\cup V$, and for each $e\in E$ let $[e]=[e]_{C}$ if $e\in E_{C}$
and $[e]=[e]_{P}$ if $e\in E_{P}$. Let us refer to this labelling
as the \emph{combined labelling}. Note that the size $|W|$ is the
sum of sizes $|U|+|V|$.
\begin{proposition}
Assume that the periphery $P$ is a tree. Assume that the labellings used for $C$ and $P$ produce no false positives when used to encode shortest paths. Then if the combined labelling is used to represent a shortest path then it has no false positives.\end{proposition}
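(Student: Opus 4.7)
The plan is to reduce the no-false-positives claim for the combined labelling to the corresponding assumptions on the core and periphery labellings, using the tree structure of $P$ to pin down the shape of any shortest path in $G$.

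\textbf{Label decomposition.} Since $U$ and $V$ are disjoint, the label $[S]$ splits cleanly: $[S]\cap U=[S\cap E_{C}]_{C}$ and $[S]\cap V=[S\cap E_{P}]_{P}$. Consequently, any $f\in E$ with $[f]\subseteq[S]$ satisfies $[f]_{C}\subseteq[S\cap E_{C}]_{C}$ when $f\in E_{C}$, or $[f]_{P}\subseteq[S\cap E_{P}]_{P}$ when $f\in E_{P}$. Thus it suffices to show that $S\cap E_{C}$ is a shortest path in $C$ (possibly empty) and that $S\cap E_{P}$, identified with the corresponding edges of $P$, is a shortest path in $P$ (possibly empty); the component hypotheses then immediately rule out each case.

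\textbf{Structural analysis of $S$.} The key claim is that the vertices of $S$ belonging to $V_{C}$ occur in a single contiguous block. Suppose to the contrary that $S$ contained two $V_{C}$-vertices $a,b$ separated along $S$ by a non-empty run $x_{1},\dots,x_{k}$ of non-core vertices. Letting $\pi$ denote the contraction sending $V_{C}$ to one vertex $c$ and fixing everything else, this excursion projects to a closed walk $c,x_{1},\dots,x_{k},c$ of length at least $2$ in $P$. Because $P$ is a tree, the contraction creates no parallel edges, and so distinct edges of $E\setminus E_{C}$ project to distinct edges of $P$; hence the excursion is a closed trail of positive length in a tree, which is impossible. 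So the core vertices of $S$ are contiguous, which means $S\cap E_{C}$ is a subpath of $S$ with both endpoints in $V_{C}$ lying entirely inside $C$; any strictly shorter $C$-path between those endpoints would produce a shorter $G$-path, so $S\cap E_{C}$ is a shortest path in $C$. For $S\cap E_{P}$, its edges form a trail in $P$ from $\pi(u)$ to $\pi(v)$ (passing through $c$ exactly when $S$ visits $V_{C}$); since every trail in a tree is a simple path and hence the unique geodesic between its endpoints, $S\cap E_{P}$ is a shortest path in $P$.

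\textbf{Completion.} A hypothetical false positive $f\notin S$ for $S$ under the combined labelling now falls, by the label decomposition above, into the core or the periphery case and produces a false positive for one of the shortest paths identified in the structural step, contradicting the component hypotheses. The main obstacle is this structural step: the tree assumption on $P$ is invoked precisely to exclude shortest $G$-paths that split their core visits into several pieces joined by periphery detours, because any such detour would realise a forbidden non-trivial closed trail inside the tree $P$.
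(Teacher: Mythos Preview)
Your argument is correct and follows the same overall line as the paper's: split $[S]$ into its $U$- and $V$-parts, observe that $S\cap E_C$ and $S\cap E_P$ are shortest paths in $C$ and $P$ respectively, and reduce a putative false positive to one in a component. The only substantive difference is that the paper simply asserts ``it is obvious that both $S_C$ and $S_P$ are \dots\ shortest paths'' without further comment, whereas you actually supply the argument (contiguity of the core block via the no-closed-trail property of the tree $P$, and the fact that a trail in a tree is automatically a geodesic); this is precisely where the tree hypothesis on $P$ is needed, so your version makes explicit what the paper leaves implicit.
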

\begin{proof}
For each shortest path $S$ in $G$, denote by $S_{C}$ the set of
the edges of $S$ contained in $C$, and denote by $S_{P}$ the set
of the edges of $S$ contained in $P$. It is obvious that both $S_{C}$
and $S_{P}$ are not just sets of edges, but paths, and, moreover,
shortest paths. Note that $[S]=[S_{C}]\cup[S_{P}]$ and $[S_{C}]=[S_{C}]_{C}\subseteq U$
and $[S_{P}]=[S_{P}]_{P}\subseteq V$. Assume that there is a false
positive $f\in E$; that is, $f\notin S$ and $[f]\in[S]$. Hence,
we conclude that $[f]\in[S_{C}]_{C}$ or $[f]\in[S_{P}]_{P}$. From
$[f]\in[S_{C}]_{C}$ it follows that $[f]\subseteq U$ and, therefore,
$f\in E_{C}$ and $[f]=[f]_{C}$. By assumption, labelling $[\cdot]_{C}$
has no false positives; therefore, it is not possible to have $f$
such that $f\notin S_{C}$ and $[f]_{C}\in[S_{C}]_{C}$. Similarly,
the case $[f]\in[S_{P}]_{P}$ is also impossible.
\end{proof}

\includegraphics[scale=0.2]{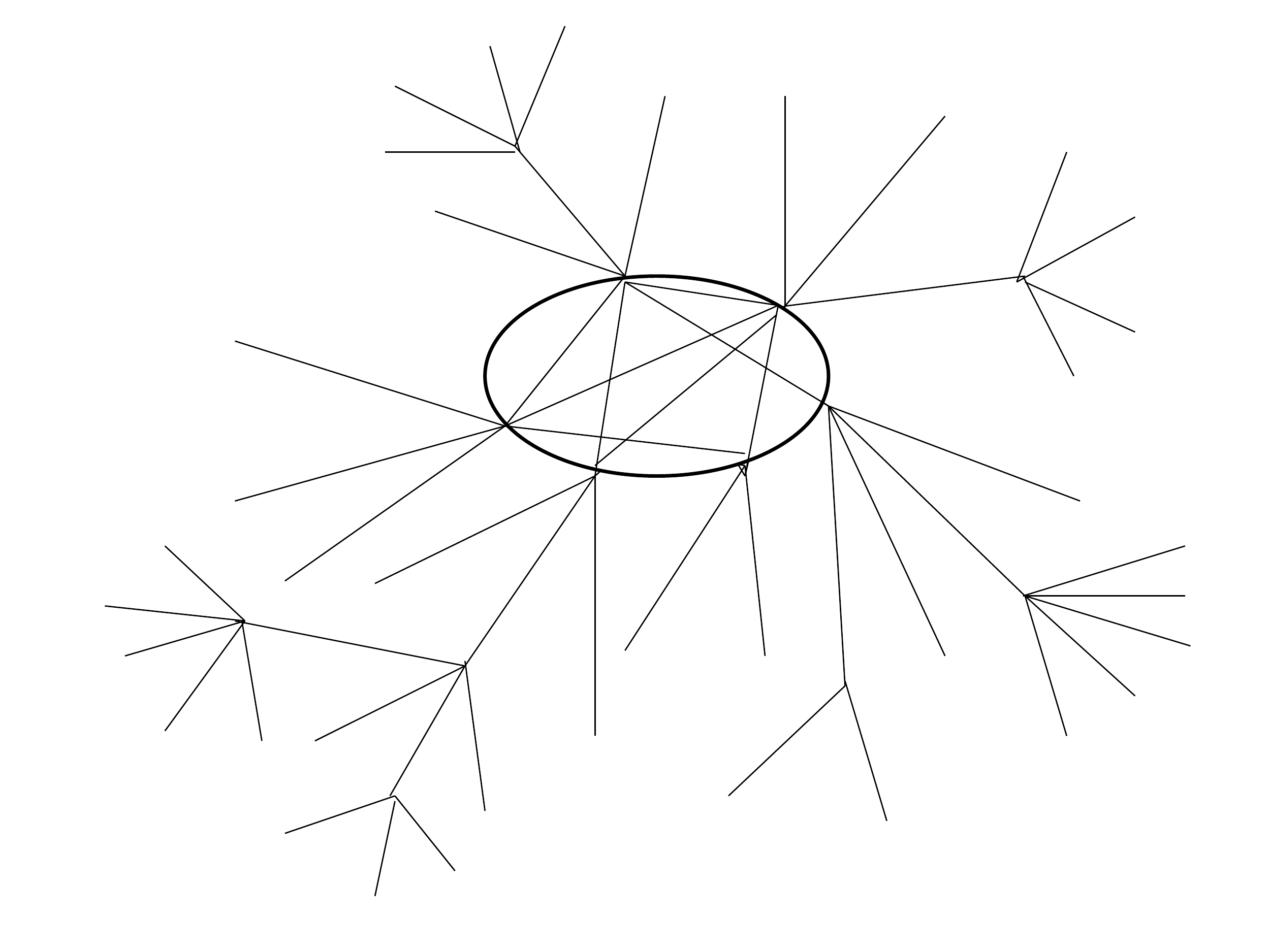}~~~~~~~~~~~~\includegraphics[scale=0.2]{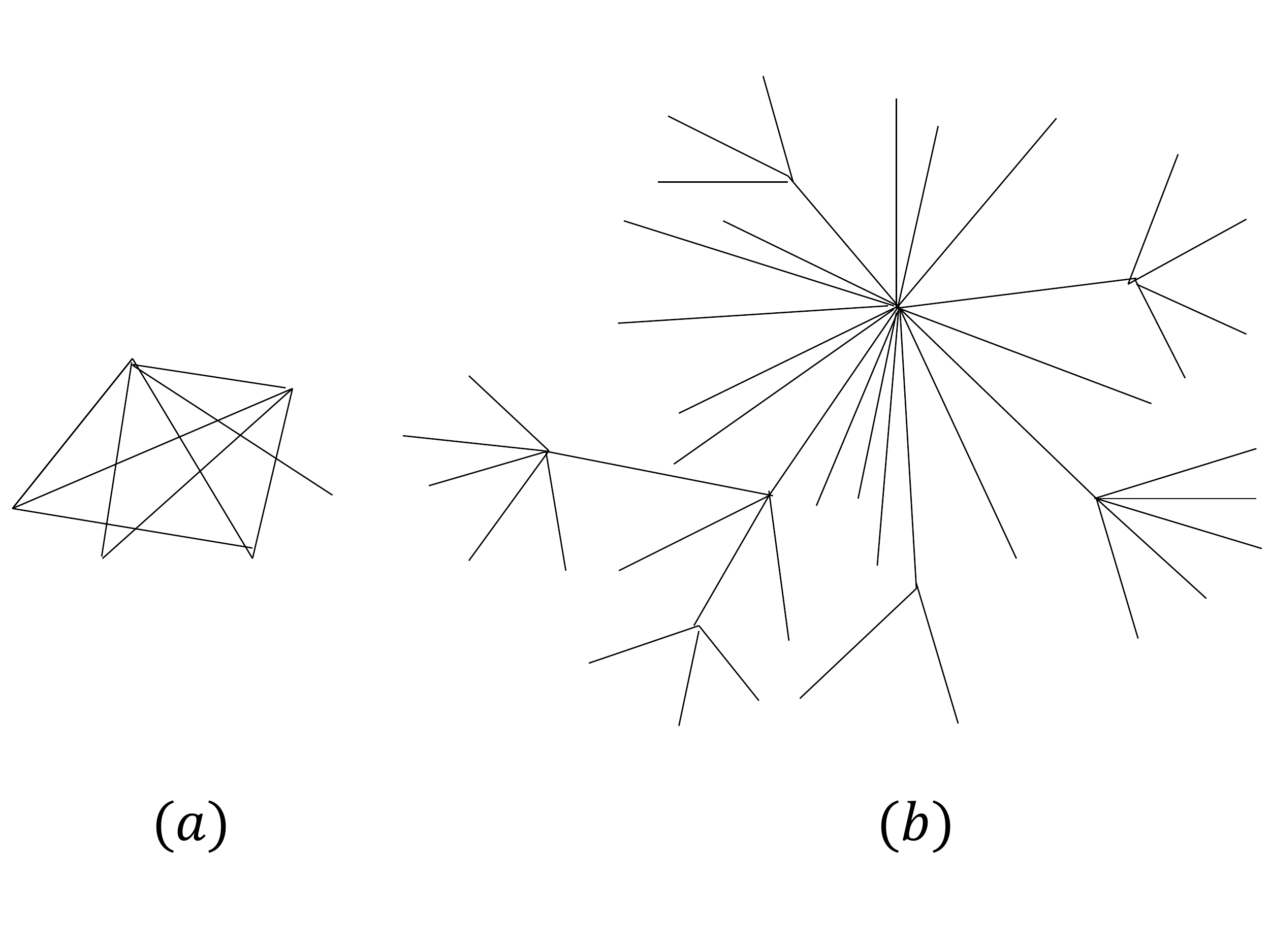}~~~~~~~~~~~~

\includegraphics[scale=0.2]{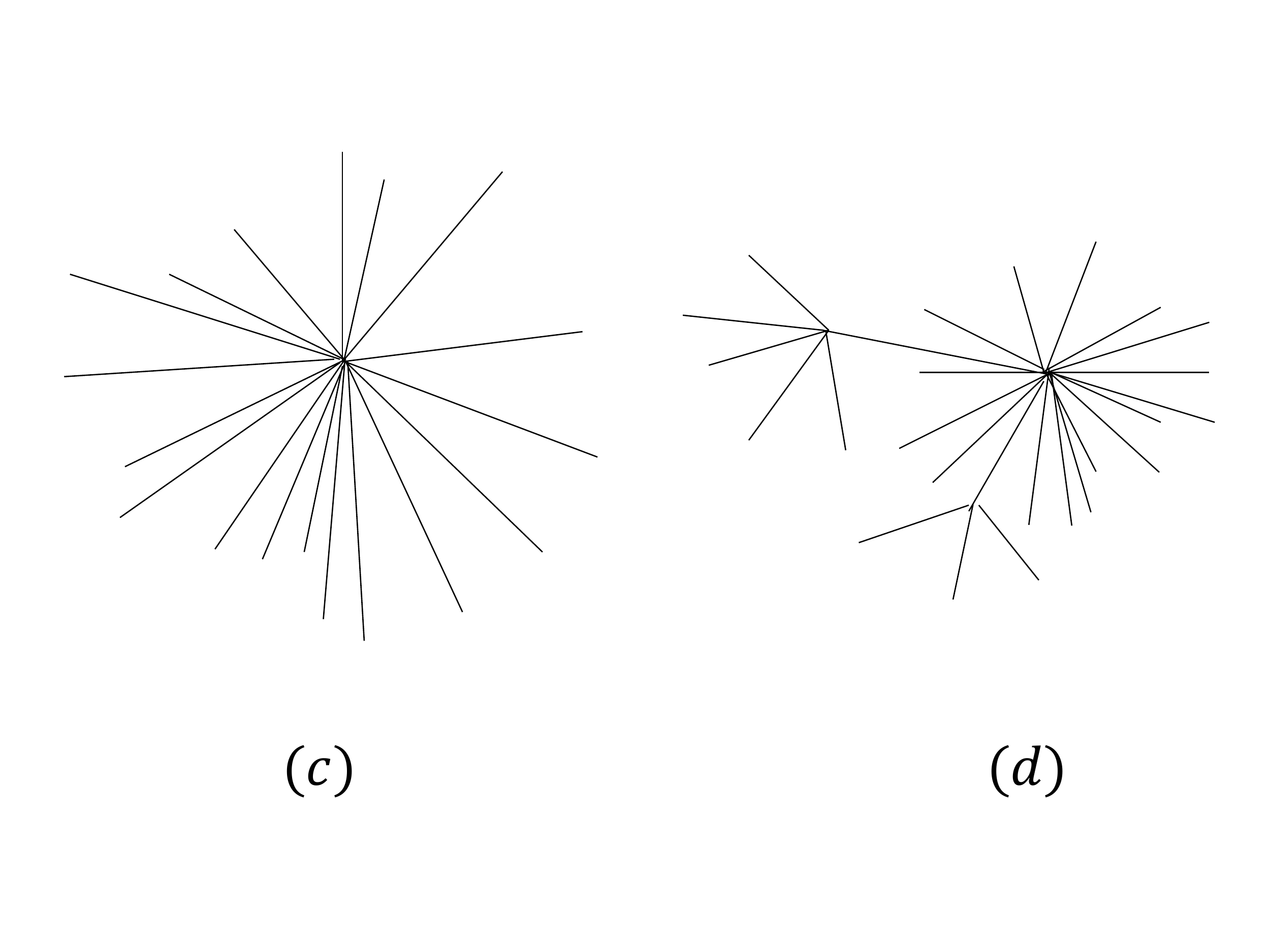}~~~~~~~~~~~~\includegraphics[scale=0.2]{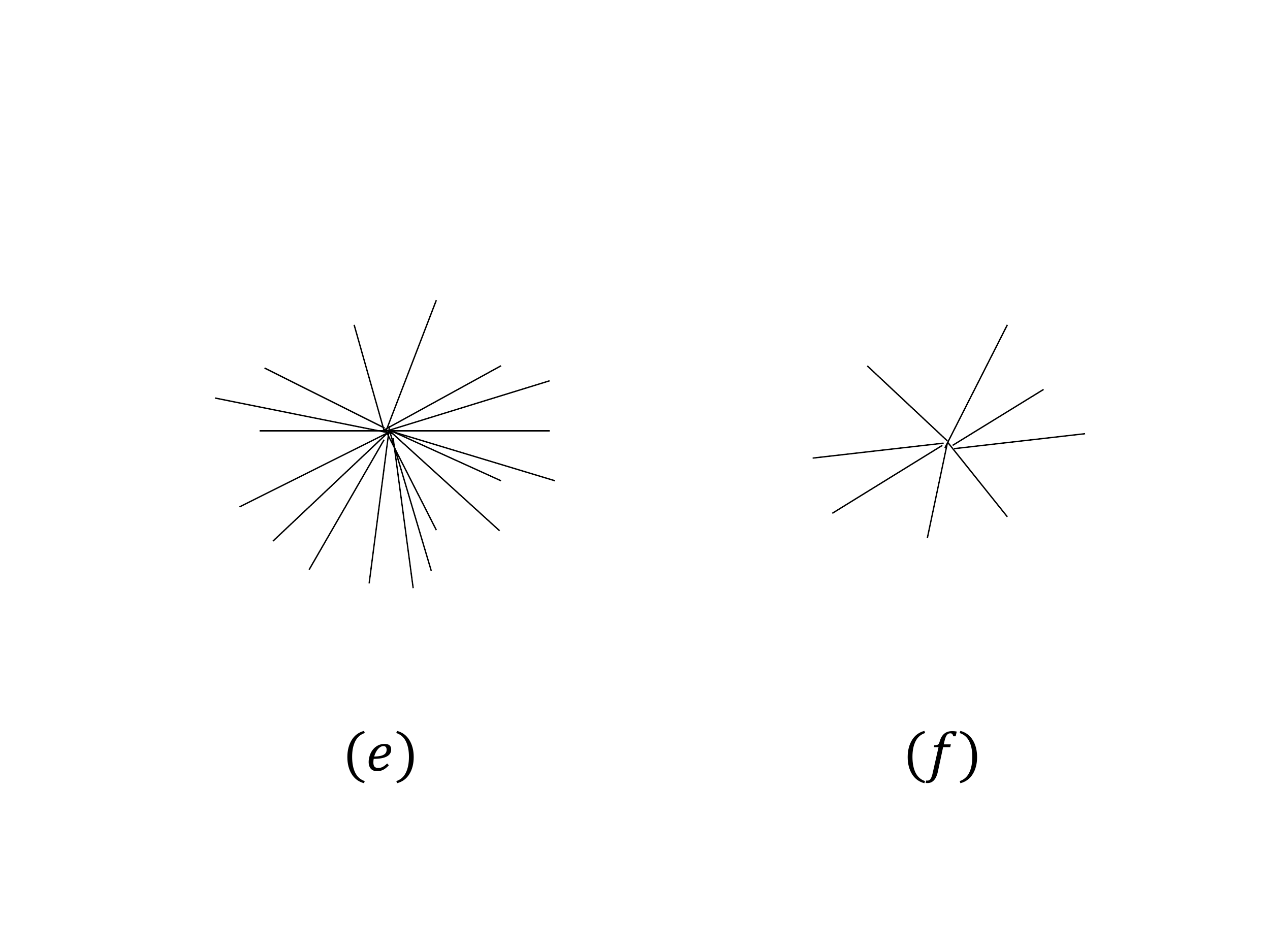}

Let us consider an example of a useful decomposition of a graph. The
graph on the figure, with a dense core (circled) and a tree-like periphery
can be decomposed into the core (a) and the periphery (b), as shown; then the bit-per-vertex labelling for its core can be introduced
as in Subsection \ref{sub:Bit-per-vertex}. To deal with the tree (b),
decompose it into the central star (c) and the periphery of the tree (d),
as shown; continue in the same fashion until the periphery of the tree is completely decomposed into stars, like (d) in our example is decomposed into stars (e), (f).
Then for each star, star labelling can be introduced as in Subsection \ref{sub:Encoding-for-star}. After that, combine all these labellings using the combined labelling described in this subsection.

\section{Models of computer networks}

In this section we consider several kinds of examples inspired by
practice.

\subsection{Core-periphery graphs}

Suppose $n$ is a positive integer. Suppose a graph $G$ consists
of a dense graph with $n$ vertices (this is the core of $G$), and
each of these $n$ vertices is adjacent to $n-1$ hanging vertices
(this is the periphery of $G$). Thus, there are $n^{2}$ vertices
in $G$ in total, and $G$ decomposes into a dense graph with $n$
vertices and a star with $n(n-1)$ edges. Suppose we label edges in
the core using the bit-per-vertex labelling, as in Subsection \ref{sub:Bit-per-vertex}, and edges in the
periphery using the star labelling, as in Subsection \ref{sub:Encoding-for-star}, and then
combine these labels using the combined labelling, as in Section \ref{sec:Decomposing-graphs-for}.
The table below shows the combined size $|U|$ needed for routing
in $G$.

\begin{tabular}{|c|c|c|c|c|}
\hline 
$n$ & $|V|=n^{2}$ & $|E|$ & theoretical smallest size & $|U|$\tabularnewline
\hline 
\hline 
$100$ & $10000$ & $14850$ & $26$ & $200$\tabularnewline
\hline 
$200$ & $40000$ & $59700$ & $30$ & $326$\tabularnewline
\hline 
$300$ & $90000$ & $134550$ & $32$ & $447$\tabularnewline
\hline 
$400$ & $160000$ & $239400$ & $34$ & $565$\tabularnewline
\hline 
$500$ & $250000$ & $374250$ & $35$ & $668$\tabularnewline
\hline 
\end{tabular}

\subsection{Binary trees}

Consider a rooted binary tree $G$ of height $h$; more precisely,
we require that the tree be \emph{perfect}, that is, every vertex
except leaves has exactly two children, and every leaf is at the distance
$h$ from the root. Suppose we decompose $G$ into stars, following
the procedure in Section \ref{sec:Decomposing-graphs-for}, by first
contracting the $2$-edge star centered at the root of the tree, then contracting the
$4$-edge star centered at the root, etc., until the last remaining
tree is a $2^{h}$-edge star (corresponding to the hanging edges of
the original tree). Suppose we label edges in each star using the star labelling, as in Subsection
\ref{sub:Encoding-for-star}, and then combine these labels using the combined labelling, as in
Section \ref{sec:Decomposing-graphs-for}. The table below shows the
combined size $|U|$ needed for routing in $G$. Our computational
experiments show that the size $|U|$ grows at the rate $O(h^{3})$.
For comparison, the theoretical smallest size grows at the rate $O(h)$.

\begin{tabular}{|c|c|c|c|}
\hline 
$h$ & $|V|$ & theoretical smallest size & $|U|$\tabularnewline
\hline 
\hline 
$5$ & $63$ & $8$ & $44$\tabularnewline
\hline 
$10$ & $2047$ & $15$ & $252$\tabularnewline
\hline 
$15$ & $65535$ & $22$ & $733$\tabularnewline
\hline 
\end{tabular}

\subsection{Random labels} \label{sub:Bloom}

In the research that preceded ours \cite{Jokela2009,carrea2014optimized}, labels $[e]$ for edges are chosen not in the way we do it in this paper, but as random subsets of $U$ of a fixed size $k$; such labels are called Bloom filters. Bloom filters were introduced in \cite{bloom1970space} and then studied by various authors; see, for example, the survey article \cite{broder2004network}. Bloom filters produce false positives with a certain probability, which is usually approximated \cite{bloom1970space} by the formula $\left(1-e^{\frac{-kn}{m}}\right)^{k}$, where $m=|U|$, $n=|S|$ and $k$ is the size of $[e]$ for each $e\in E$.  

The table below demonstrates the false positive rates for stars with a varying size $|E|$ if random Bloom filters are used instead of star labelling to encode paths of length $2$. In this comparison, the parameters of random Bloom filters are the same as with star labelling, that is, $m=|U|$ and $n=2$; however, instead of using $k=R+\frac{R(R-1)}{2}$, we optimise it to make this comparison fairer towards Bloom filters. Indeed \cite{broder2004network}, if the expected size $n$ is known in advance, the value of the parameter $k$ can be chosen to minimise the probability of false positives when representing sets of size $n$; thus, we use the optimal value, which is given by the formula $k = \frac{m}{n}\ln 2$. 

Here is how the numbers in the table should be interpreted. We assume that each message is delivered from a leaf of the star to another leaf; that is, the delivery path consists of $2$ edges. For $|E|=40$ the false positive rate is $0.6\%$; this means that any given edge out of the $38$ edges which are not on the path can be a false positive with probability $0.6\%$; thus, there is a total probability $20\%$ that there will be at least one false positive. Therefore, with roughly every fifth message, the central node of the star will not be able to find out along which edge the message should be forwarded. 

\begin{tabular}{|c|c|c|}
\hline 
$|E|$ & $|U|$ & false positive rate\tabularnewline
\hline 
\hline 
$10$ & $10$ & $9.1\%$\tabularnewline
\hline 
$20$ & $15$ & $2.7\%$\tabularnewline
\hline 
$30$ & $18$ & $1.3\%$\tabularnewline
\hline 
$40$ & $21$ & $0.6\%$\tabularnewline
\hline 
\end{tabular}

\section{Conclusion}

We started this work after several colleagues conducting research in electronic engineering told us that in their experiments, using random Bloom filters (as described in Subsection \ref{sub:Bloom}) seems a reasonably good approach, producing relatively few false positives when a reasonably small size $|U|$ is used. Our results convincingly show that with some light assumptions (we assume that the network has a certain shape, and that messages are delivered along shortest paths) it is possible to introduce labelling which both requires a reasonably small size $|U|$ and has no false positives at all.

\bibliographystyle{siamplain}
\bibliography{references-tree-dense}
\end{document}